\documentclass[letterpaper, 10 pt, conference]{ieeeconf}
\IEEEoverridecommandlockouts
\usepackage{mathrsfs}
\usepackage{textcomp}
\usepackage{cite}
\usepackage{amsmath,amssymb,amsfonts}
\usepackage{graphicx}
\usepackage{float}
\usepackage{textcomp}
\usepackage{xcolor}
\usepackage{hyperref}
\usepackage{algpseudocode}
\usepackage[ruled,vlined,linesnumbered]{algorithm2e}

\makeatletter
\newlength{\algorithmtopruleskip}
\let\lcss@old@algocf@pre@ruled\@algocf@pre@ruled
\def\@algocf@pre@ruled{\vskip\algorithmtopruleskip\lcss@old@algocf@pre@ruled}
\let\lcss@old@algocf@pre@algoruled\@algocf@pre@algoruled
\def\@algocf@pre@algoruled{\vskip\algorithmtopruleskip\lcss@old@algocf@pre@algoruled}
\makeatother

\usepackage[normalem]{ulem} 
\newtheorem{remark}{Remark}
\newtheorem{lemma}{Lemma}
\newtheorem{theorem}{Theorem}
\newtheorem{proposition}{Proposition}
\newtheorem{definition}{Definition}
\newtheorem{problem}{Problem}
\newtheorem{corollary}{Corollary}
\newtheorem{assumption}{Assumption}

\usepackage{bm}
\usepackage{comment}
\usepackage[normalem]{ulem}
\usepackage{cancel} 

\newcommand{\D}{\nabla_x} 
\begin{document}

\title{\LARGE \bf Time-Varying Perturbations of Contractive Systems With an Application to Safe Stabilization}

\author{Andreas Oliveira, \IEEEmembership{Graduate Student Member, IEEE},
Arya Honarpisheh, \IEEEmembership{Graduate Student Member, IEEE},\\
Mustafa Bozdag, \IEEEmembership{Graduate Student Member, IEEE} and
Mario Sznaier, \IEEEmembership{Fellow, IEEE}
\thanks{This work was partially supported by NSF grant CMMI 2208182, AFOSR grant FA9550-19-1-0005 and  ONR grant N00014-21-1-2431. The authors are with the Robust Systems Lab, ECE Department, Northeastern University, Boston, MA 02115. \textit{(Corresponding author: Andreas Oliveira, e-mail: franciscodemelooli.a@northeastern.edu).}}
}

\maketitle
\thispagestyle{empty}
\pagestyle{empty}

\begin{abstract}
Perturbation theory for asymptotically stable systems has received sustained attention, leading to many criteria that preserve stability under uncertainty. A limitation of classical stability analysis, however, is that it is inherently equilibrium-dependent. Contractive systems, by contrast, are defined independently of any equilibrium and admit a rich set of robustness bounds. In this letter, we develop new time-varying perturbation conditions for contractive dynamics that preserve incremental exponential stability of the perturbed system and, in specific regimes, guarantee asymptotic convergence to solutions of the nominal dynamics. As an application, we propose a safety filter for steering a nominally contractive system to a desired equilibrium while avoiding an unsafe set, and we present an example illustrating advantages of the proposed method over existing approaches.
\end{abstract}

\begin{keywords}
Contractivity, perturbed systems, safe control, time-varying systems.
\end{keywords}

\section{Introduction}

Robustness of control systems is concerned with preserving stability and performance under uncertainties such as modeling errors and disturbances. For linear time-invariant systems, asymptotic stability is equivalent to the existence of a quadratic Lyapunov function, which yields robustness margins for perturbations that preserve the Lyapunov inequality; disturbances that do not preserve this inequality must typically vanish asymptotically to retain convergence to the equilibrium \cite{khalil2002nonlinear}. For nonlinear systems, the analysis is more delicate and is usually based on a Lyapunov function, whose existence may be guaranteed by converse Lyapunov theorems \cite{khalil2002nonlinear}. Robustness is then characterized by perturbations that preserve negativity of the Lyapunov derivative, or by vanishing perturbations, leading to notions such as practical stability \cite{teel1995tools} and input-to-state stability \cite{sontagISS}. Moreover, unlike in the linear case, even vanishing perturbations can produce unbounded solutions \cite{sontag2002remarkconverginginputconvergingstateproperty}.

To move beyond the equilibrium-dependent viewpoint attached to stability, robustness results have also been developed under contractivity assumptions. Originally introduced in the analysis of numerical integration methods \cite{dahlquist1958stability}, contraction theory has since found widespread applications in networked systems, biology, machine learning, data-driven control among others \cite{Sontag-Aminzare, FB-CTDS, oliveira2025DDC}. Informally, a system is contractive if a suitably defined distance between any two of its trajectories decays exponentially fast. Contractive systems possess several attractive structural properties. For example, for time-invariant systems, contraction implies the existence of a unique globally exponentially stable equilibrium, while for time-varying systems the notion remains trajectory-based and does not rely on equilibria \cite{FB-CTDS}. Moreover, contraction guarantees entrainment under periodic forcing \cite{contractive-with-inputs-sontag}. 

This equilibrium-independence is crucial for the analysis of perturbed contractive systems, as incremental exponential stability (IES) of the perturbed dynamics can be characterized without reference to the equilibrium of the nominal system. In this letter, contractivity is characterized through logarithmic norms, whose sub-additivity yields simple sufficient conditions for robustness of contraction under perturbations \cite{FB-CTDS}. Earlier bounds in \cite{coppel1965stability} showed asymptotic convergence of perturbed trajectories to nominal ones. More recently, \cite{contractive-with-inputs-sontag} established exponential convergence when both the perturbation and its differential vanish exponentially fast.

In this letter, we derive new perturbation bounds for contractive systems. We extend existing perturbation analyses by identifying a class of disturbances under which IES is preserved, without requiring the perturbed system to converge to the trajectories of the unperturbed system. For this class, we characterize when the perturbed system remains IES while converging to solutions of a different nominally contractive system, leveraging the Alekseev-Gröbner formula. In particular, if the disturbance converges uniformly to a limit such that the resulting vector field is exponentially stable, and its differential satisfies our disturbance condition, then the perturbed system remains IES and all trajectories converge to the new equilibrium. Likewise, if the nominal system is periodic and the disturbance converges to a periodic function whose differential satisfies the same condition, then the perturbed system remains IES and all trajectories converge to a periodic solution. 

As an application, we illustrate how the perturbation framework above can be used for safe control of a nominally contractive system to a desired equilibrium point. The construction follows an equilibrium-tracking viewpoint: we assume access to a \emph{safe reference evolution} generated by a feedforward shift that steers the equilibrium along a path that avoids an unsafe set, in the spirit of equilibrium tracking solutions studied in \cite{davydov2025time}. The resulting algorithm departs from traditional Control Lyapunov Function (CLF) -- Control Barrier Function (CBF) - Quadratic Programs (QP) \cite{Ames-Survey} in two ways. First, it uses a different objective: among all inputs that satisfy the barrier constraint, we select the one that is maximally aligned with the ``shifted'' nominal flow, thereby keeping the closed-loop direction close to the safe reference. 
Second, convergence to the desired point is not imposed through an explicit CLF inequality; it follows from the perturbation properties of the nominally contractive dynamics under a vanishing schedule on the feedforward shift and the feedback correction. The online computation is therefore devoted solely to enforcing safety.

\section{Preliminaries}

We denote the $n$-dimensional Euclidean space by $\mathbb{R}^n$. For a vector $x \in \mathbb{R}^n$ and some norm $| \cdot |$ in $\mathbb{R}^n$, $|x|$ denotes its given norm and $|x|_p$ if $|\cdot|$ is an $\ell_p$ norm. For a matrix $A$, $\|A\|$ denotes the induced matrix norm. The transpose of a matrix $X$ is denoted by $X^\top$. The Jacobian of $f$ with respect to $x$ is denoted by $\nabla_x f$, the derivative of a scalar function $\gamma$ by $\gamma'(t)$, and the inner product of two vectors $x$ and $y$ by $\langle x,y\rangle$.

\begin{definition}\cite{contractive-with-inputs-sontag}
Let $K>0$ and fix a norm $|\cdot|$ on $\mathbb{R}^n$. A subset $C\subset \mathbb{R}^n$ is \emph{$K$-reachable} if, for any two points $x_0,y_0\in C$, there exists a continuously differentiable curve $\gamma:[0,1]\to C$ such that
\begin{equation}
\gamma(0)=x_0,\, \gamma(1)=y_0,\, |\gamma'(r)| \le K\,|y_0-x_0| \, \forall r\in[0,1].    
\end{equation}
\end{definition}

\begin{definition}
A scalar continuous function $\alpha(r)$, defined for $r \in [0,a)$, is said to belong to class $\mathcal{K}$ if it is strictly increasing and $\alpha(0)=0$. It is said to belong to class $\mathcal{K}_{\infty}$ if it is also defined for all $r \ge 0$ and $\alpha(r)\to\infty$ as $r\to\infty$.
\end{definition}

We consider the (possibly) time-varying nonlinear system:
\begin{equation}\label{eq:sys}
    \dot{x}(t) = f(x,t), \qquad x \in \mathbb{R}^n, \ t \geq t_0,
\end{equation}
where $f:\mathbb{R}^n \times [0,\infty) \to \mathbb{R}^n$ is assumed to be continuously differentiable w.r.t. $x$ (unless otherwise specified) and continuous w.r.t. $t$. For any initial time $t_0\ge 0$ and initial condition $x_0$, we denote the corresponding solution by $\varphi(t;t_0,x_0)$.

\begin{definition}\label{def:IES}
System \eqref{eq:sys} is said to be \emph{incrementally exponentially stable} (IES) with overshoot $K$ on a forward-invariant set
$\Omega \subseteq \mathbb{R}^n$ if there exist $c>0$ such that, for every
$t_0\ge 0$ and any two solutions $\varphi(t;t_0,x_0)$ and $\varphi(t;t_0,y_0)$ of \eqref{eq:sys} with
$x_0,y_0\in\Omega$,
\begin{equation}
\label{eq:ies}
    |\varphi(t;t_0,x_0)-\varphi(t;t_0,y_0)|
    \le K\,e^{-c(t-t_0)}\,|x_0-y_0|,
    \forall\, t\ge t_0.
\end{equation}
\end{definition}

\subsection{Contraction Theory}

In this section, we introduce the class of contractive systems w.r.t. logarithmic norms and highlight several interesting properties that contractive systems present.

\begin{definition}{\cite{desoer2009feedback}} Let $|\cdot|$ be a norm on $\mathbb{R}^n$. The \emph{logarithmic norm} of a matrix $A \in \mathbb{R}^{n \times n}$ is defined by
\begin{equation}
    \mu(A) := \lim_{h \to 0^+} \frac{\|I + hA\| - 1}{h}.
\end{equation}
\end{definition}

\begin{definition}{\cite{contractive-with-inputs-sontag}}
The system \eqref{eq:sys} or its vector field, is said to be \emph{infinitesimally contracting} in a set $G \subset \mathbb{R}^n$, if there exists some norm $|\cdot|$, with associated logarithmic norm $\mu$,  such that, for some constant $c > 0$ (the contraction rate), it holds that:
\begin{equation}\label{def:infinitesimally-contracting}
    \mu(\D f(x,t)) \leq -c,  \quad \forall x \in G \quad \forall t \ge 0 .
\end{equation}
\end{definition}

This definition based on the logarithmic norm of the Jacobian of $f$ yields the convergence behavior:

\begin{theorem}{\cite{contractive-with-inputs-sontag}}\label{thm1}
Suppose that $f(x,t)$ is infinitesimally contracting, with contraction rate $c$, on a $K$-reachable forward invariant set $G$. Then, for every two solutions $\varphi(t;t_0,x_0)$ and $\varphi(t;t_0,y_0)$ of \eqref{eq:sys} such that $x_0,y_0 \in G$, it holds that:
\begin{equation}
    |\varphi(t;t_0,x_0) - \varphi(t;t_0,y_0)| \leq Ke^{-c(t-t_0)} \, |x_0 - y_0|, 
    \forall t \geq t_0.
\end{equation}
\end{theorem}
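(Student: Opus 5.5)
The plan is the standard variational argument along a curve joining the two initial conditions, combined with a Coppel-type bound on the variational equation in terms of the logarithmic norm.

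\textbf{Step 1 (curve of initial conditions).} Fix $t_0\ge 0$ and $x_0,y_0\in G$. By $K$-reachability, pick a $C^1$ curve $\gamma:[0,1]\to G$ with $\gamma(0)=x_0$, $\gamma(1)=y_0$ and $|\gamma'(r)|\le K|y_0-x_0|$. For each $r\in[0,1]$ consider the solution $\psi(t,r):=\varphi(t;t_0,\gamma(r))$. Forward invariance of $G$ gives $\psi(t,r)\in G$ for all $t\ge t_0$, so the bound $\mu(\D f)\le -c$ is available along every such trajectory.

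\textbf{Step 2 (variational equation).} Since $f$ is $C^1$ in $x$ and continuous in $t$, the standard differentiable-dependence theorem applies. Hence $w(t,r):=\partial_r\psi(t,r)$ exists, is continuous, and satisfies
\begin{equation*}
\dot w(t,r)=\D f(\psi(t,r),t)\,w(t,r),\qquad w(t_0,r)=\gamma'(r).
\end{equation*}

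\textbf{Step 3 (log-norm bound).} Next I would show the Coppel-type inequality $D^+|w(t,r)|\le \mu(\D f(\psi(t,r),t))\,|w(t,r)|$, where $D^+$ is the upper right Dini derivative. To do so, write
\begin{equation*}
w(t+h)=(I+hA(t))\,w(t)+o(h),\qquad A(t):=\D f(\psi(t,r),t).
\end{equation*}
This gives $|w(t+h)|\le\|I+hA(t)\|\,|w(t)|+o(h)$. Subtracting $|w(t)|$, dividing by $h$ and using the definition of $\mu$ yields the claim. The comparison lemma for Dini derivatives then gives
\begin{equation*}
|w(t,r)|\le e^{-c(t-t_0)}|\gamma'(r)|\le Ke^{-c(t-t_0)}|x_0-y_0|.
\end{equation*}
This step is the main technical point. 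The obstacles are handling the non-differentiability of the norm via Dini derivatives, and checking that the $o(h)$ term is legitimate, which follows from continuity of $\D f$ along the trajectory.

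\textbf{Step 4 (integrate along the curve).} Since $\psi(t,\cdot)$ is $C^1$ in $r$,
\begin{equation*}
\varphi(t;t_0,y_0)-\varphi(t;t_0,x_0)=\int_0^1 w(t,r)\,dr.
\end{equation*}
Taking norms and using the bound from Step 3 gives
\begin{equation*}
|\varphi(t;t_0,x_0)-\varphi(t;t_0,y_0)|\le \int_0^1|w(t,r)|\,dr\le Ke^{-c(t-t_0)}|x_0-y_0|.
\end{equation*}

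A tacit assumption is that solutions exist for all $t\ge t_0$. If global existence is not assumed, it should follow from the same estimate: the bound keeps all nearby trajectories within a bounded distance of one another, so completeness reduces to completeness of a single solution. I would simply assume solutions are defined on $[t_0,\infty)$, as the statement implicitly does.
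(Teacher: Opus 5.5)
Your proposal is correct and follows the route the paper relies on. The paper cites \cite{contractive-with-inputs-sontag} for this result and reproduces that argument in the proof of Proposition~\ref{Prop:Nonlinear} (take $g=h=0$ there): a $K$-reachable curve of initial conditions, the variational equation for $w=\partial_r\psi$, Coppel's logarithmic-norm inequality, and integration of $w$ over $r\in[0,1]$; the only difference is that you derive Coppel's inequality yourself via the upper Dini derivative and a comparison argument rather than citing it, which makes that step self-contained.
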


Therefore ensuring $\mu(\D f(x,t))$ is uniformly bounded above by a negative constant is a sufficient condition for IES of a forward invariant $K-$reachable set $G$. Other interesting properties follow from the above such as entrainment to periodic forcing, existence of exponentially stable equilibrium for time-invariant systems and others \cite{FB-CTDS}.

We use the following definition of Control Barrier Functions (CBFs) in combination with the contractivity results of this paper to develop an algorithm, which jointly guarantees safety and asymptotic convergence to a desired point.
\begin{definition}{\cite{Ames-Survey}}\label{def:CBF}
A continuously differentiable function $h:\mathbb{R}^n\to\mathbb{R}$ is 
a CBF for $\dot{x}=f(x,t)+g(x)u$ with respect to $\mathcal{S}=\{x:h(x)\ge 0\}$ if there exists $\alpha\in\mathcal{K}$ such that
\begin{equation}
    \sup_{u}\bigl[L_f h(x) + L_g h(x)\,u\bigr] \ge -\alpha(h(x)),\, \forall\,x\in\mathcal{S}, \, \forall t \ge t_0.
\end{equation}
\end{definition}

\section{Robustness of Contractive Systems}
First, we consider the perturbed version of \eqref{eq:sys}:
\begin{equation}\label{eq:perturbed-ode}
\dot{x}(t) = f(x,t) + g(x,t),
\end{equation}
where $g$ is also assumed to be continuously differentiable w.r.t. $x$ and continuous w.r.t. $t$. In classical contractivity perturbation analyses, the goal is to establish sufficient conditions on the disturbance term $g(x,t)$ under which the perturbed system \eqref{eq:perturbed-ode} preserves the infinitesimally contracting property. For this there are simple sufficient conditions, that leverage the subadditivity of the logarithmic norm, and can be found in Lemma 3 of \cite{contractive-with-inputs-sontag} and Theorem 3.19 of \cite{FB-CTDS}.

In this letter, we relax the requirement that the perturbed system remain infinitesimally contracting in order to accommodate a broader class of disturbances. Nevertheless, we retain IES of the resulting system. The main result in the literature satisfying these assumptions is given in \cite{contractive-with-inputs-sontag}. In particular, there it is shown that if the nominal dynamics are infinitesimally contracting on a $K$-reachable forward-invariant set and if a time-varying perturbation $g(x,t)$, together with its Jacobian $\D g(x,t)$, decays uniformly exponentially in time, then the perturbed trajectories converge exponentially to nominal trajectories, and the perturbed dynamics is approximately incrementally exponentially stable. 

The result above imposes relatively strong assumptions, namely that both the perturbation and its Jacobian decay exponentially in time. In this work, we relax these requirements to show a broader class of perturbations preserves IES of the perturbed system. We further show that exponential convergence to nominal trajectories can be recovered without requiring the exponential decay of the Jacobian.

\subsection{Time-Varying Perturbations}

In order to develop our perturbation results we split the perturbations into two different functions, namely we consider the following vector field:
\begin{equation}\label{eq:nonlinear-perturbation}
    \dot{x}(t) = f(x,t) + g(x,t) + h(x,t).
\end{equation}
where both $g,h$ are assumed to be continuously differentiable w.r.t. $x$ and continuous w.r.t. $t$. As we will show, $g(x,t)$ and $h(x,t)$ can exhibit qualitatively distinct behaviors while still preserving IES of the resulting system. Before stating our results, it is important to point out that IES for a time-varying system does not, in general, imply the existence of an exponentially stable equilibrium, unlike in the time-invariant case. Consequently, unless one enforces stringent structural conditions on $g(x,t)$ and $h(x,t)$, it is impossible to characterize the solutions beyond the defining property that any pair of trajectories converges exponentially to one another. We first establish a technical lemma used in the proof of the main result:
\begin{lemma}\label{lemma:concentration}
    Assume $c > 0$, $\zeta(t)$ satisfies:
    \begin{equation}
        \sup_{t \in [0,\infty)} \int_0^{t} \zeta(s) \, ds \le \eta < \infty,
    \end{equation}
    and $\xi(t)$ is locally bounded and $\limsup_{t \to \infty} \xi(t) \to l \le c -\delta$ where $\delta \in (0,c)$. Then for any $\epsilon \in (0,\delta)$, there exists $\kappa$ such that:
    \begin{equation}\label{eq:uniformly-negative}
        \int_0^{t} -c + \zeta(s) + \xi(s) \; ds \le -(\delta -\epsilon)t + \kappa, \qquad \forall t \ge 0.
    \end{equation}
\end{lemma}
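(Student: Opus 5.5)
The plan is to split the integral into its three pieces and bound each separately:
\begin{equation*}
\int_0^t \bigl(-c+\zeta(s)+\xi(s)\bigr)\,ds = -ct + \int_0^t \zeta(s)\,ds + \int_0^t \xi(s)\,ds .
\end{equation*}
The middle term is at most $\eta$ for every $t\ge 0$, directly by the hypothesis on $\zeta$. All the work is in the $\xi$ term, which I will handle with the $\limsup$ assumption plus local boundedness.

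Two interpretive points first. The statement reads $\limsup_{t\to\infty}\xi(t)\to l\le c-\delta$; I take this as $\limsup_{t\to\infty}\xi(t)=l$ with $l\le c-\delta$, so that $c-l\ge\delta$. Local boundedness should also be read as local integrability, so that $\int_0^t\xi$ makes sense; I will assume that.

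Fix $\epsilon\in(0,\delta)$.

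\begin{enumerate}
\item \emph{Choose a switching time.} By definition of $\limsup$ there is a $T\ge 0$ with $\xi(s)\le l+\epsilon\le c-\delta+\epsilon$ for all $s\ge T$.

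\item \emph{Bound the integrand on $[0,T]$.} Since $\xi$ is locally bounded, $M:=\sup_{s\in[0,T]}|\xi(s)|<\infty$.

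\item \emph{Case $t\le T$.} Here $-ct+\int_0^t\xi\le -ct+Mt$. Writing this as
\begin{equation*}
-(\delta-\epsilon)t+\bigl(M-c+\delta-\epsilon\bigr)t,
\end{equation*}
it is bounded above by $-(\delta-\epsilon)t+\max\{0,\,M-c+\delta-\epsilon\}\,T$.

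\item \emph{Case $t>T$.} Split $\int_0^t\xi=\int_0^T\xi+\int_T^t\xi$. The first piece is at most $MT$, and the second is at most $(c-\delta+\epsilon)(t-T)$. Therefore
\begin{equation*}
-ct+\int_0^t\xi\le -ct+MT+(c-\delta+\epsilon)(t-T)=-(\delta-\epsilon)t+MT-(c-\delta+\epsilon)T.
\end{equation*}
Since $c-\delta+\epsilon>0$, this is at most $-(\delta-\epsilon)t+MT$.

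\item \emph{Assemble $\kappa$.} Set $\kappa:=\eta+MT+\max\{0,\,M-c+\delta-\epsilon\}\,T$. This constant works uniformly in both cases, which gives \eqref{eq:uniformly-negative} for all $t\ge0$.
\end{enumerate}

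The only delicate step is the first. A $\limsup$ bound controls $\xi$ only eventually, not uniformly, so the strict slack $\epsilon>0$ is what produces a finite switching time $T$. The transient on $[0,T]$ is then absorbed into $\kappa$ using local boundedness. No earlier result of the paper is needed; the argument is elementary real analysis.
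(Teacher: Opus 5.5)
Your proof is correct and follows essentially the same route as the paper: you pick a time after which the $\limsup$ bound holds, bound the tail integral linearly, and absorb the transient on the initial interval into $\kappa$ using local boundedness of $\xi$. The differences are cosmetic. The paper trades $\eta$ against half of the slack $\epsilon$ (taking $T=\max\{2\eta/\epsilon,N\}$) and handles $[0,T]$ by a maximum over a compact interval, whereas you put $\eta$ directly into $\kappa$ and give explicit constants on $[0,T]$, which is slightly more direct.
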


\begin{proof}
    Let $\epsilon \in (0,\delta)$ since $\limsup_{t \to \infty} \xi(t) \to l$, there exists $N > 0$ such that $\xi(t) - l \le \frac{\epsilon}{2}$ for all $t \ge N$. Let $T:=\max\left\{\frac{2\eta}{\epsilon},N\right\}.$ Then, for all $t\ge T$,
    \begin{equation}
    \begin{aligned}
        & \int_0^{t}\bigl(-c+\zeta(s) + \xi(s)\bigr)\,ds \\
        &\le -c t + \eta + \int_0^{N} \xi(s)\,ds
        + \int_{N}^{t} \xi(s)\,ds \\
        &\le -c t + \frac{\epsilon}{2} t
        + \int_0^{N} \xi(s)\,ds
        + \left(l + \frac{\epsilon}{2}\right)(t-N) \\
        &\le
        -(\delta - \epsilon)t
        + \int_0^N \xi(s)\,ds
        - \left(c-\delta + \frac{\epsilon}{2}\right)N .
    \end{aligned}
    \end{equation}
    Let
    \begin{equation}
        \kappa_1 :=
        \int_0^{N} \xi(s)\,ds
        - \left(c -\delta + \frac{\epsilon}{2}\right)N,
    \end{equation}
    which is finite since $\xi(t)$ is locally bounded. 
    Moreover, define
    \begin{equation}
        \kappa_0 :=
        \max_{t \in [0,T]}
        \left\{
        \int_0^{t} \bigl(-c + \zeta(s) + \xi(s)\bigr)\,ds
        + (\delta - \epsilon)t
        \right\}.
    \end{equation}
    Finally, set $\kappa=\max\{\kappa_0,\kappa_1\}$. Then
    \begin{equation}
        \int_0^{t} \bigl(-c + \zeta(s) + \xi(s)\bigr)\,ds
        \le -(\delta - \epsilon)t + \kappa,
        \qquad \forall t \ge 0.
    \end{equation}
\end{proof}
With Lemma~\ref{lemma:concentration}, we are ready to state our main Proposition:

\begin{proposition}\label{Prop:Nonlinear}
Consider \eqref{eq:nonlinear-perturbation} and assume that the nominal system \eqref{eq:sys} is infinitesimally contracting with contraction rate $c>0$ in a $K$-reachable set $G$ with respect to a logarithmic norm $\mu$. If the perturbed system is forward-invariant in $G$ and the perturbations satisfy
\begin{equation}\label{eq:concentration-on-perturbation}
    \zeta(t) = \sup_{x \in G} \mu(\D g(x,t)), \quad \sup_{t \in [t_0,\infty)}\int_{t_0}^{t} \zeta(s) ds < \infty,
\end{equation}
and if there exists $\delta$ satisfying $0<\delta<c$ such that
\begin{equation}\label{eq:sub-expansivity}
    \xi(t) = \sup_{x \in G} \mu (\D h(x,t)), \quad  \limsup \xi(t) \to l \le c-\delta
\end{equation}
and $\xi(t)$ is locally bounded. Then for any $x_0,z_0 \in G$ and the associated pair of solutions 
$\varphi(t;t_0,x_0)$ and $\varphi(t;t_0,z_0)$ of \eqref{eq:nonlinear-perturbation}, 
there exist constants $K_p,\lambda_p>0$ such that
\begin{equation}
    |\varphi(t;t_0,x_0)-\varphi(t;t_0,z_0)|
    \le K_p \,e^{-\lambda_p (t-t_0)}\,|x_0-z_0|
\end{equation}
for all $t\ge t_0$, and hence \eqref{eq:nonlinear-perturbation} is \emph{IES} on $G$.
\end{proposition}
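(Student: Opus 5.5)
We follow the standard argument behind Theorem~\ref{thm1}, with a time-varying logarithmic-norm bound in place of the constant rate. Fix $x_0,z_0\in G$. By $K$-reachability there is a $C^1$ curve $\gamma:[0,1]\to G$ with $\gamma(0)=x_0$, $\gamma(1)=z_0$ and $|\gamma'(r)|\le K|z_0-x_0|$. Set $\psi(t,r):=\varphi(t;t_0,\gamma(r))$, which lies in $G$ for all $t\ge t_0$ because $G$ is forward invariant for \eqref{eq:nonlinear-perturbation}. Since $f,g,h$ are $C^1$ in $x$, $\psi$ is differentiable in $r$. The sensitivity $w(t,r):=\partial_r\psi(t,r)$ then satisfies the variational equation
\begin{equation*}
\dot w=\bigl(\D f+\D g+\D h\bigr)(\psi(t,r),t)\,w,\qquad w(t_0,r)=\gamma'(r).
\end{equation*}

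The key step bounds $|w(t,r)|$ through the upper Dini derivative: $D^+|w(t)|\le \mu\bigl(A(t)\bigr)|w(t)|$, where $A(t)$ is the Jacobian above. By subadditivity of $\mu$ and the definitions of $c$, $\zeta$ and $\xi$,
\begin{equation*}
\mu(A(t))\le -c+\zeta(t)+\xi(t),\qquad\text{uniformly in } r,
\end{equation*}
since $\psi(t,r)\in G$. A Gr\"onwall/comparison argument gives
\begin{equation*}
|w(t,r)|\le \exp\Bigl(\int_{t_0}^t(-c+\zeta(s)+\xi(s))\,ds\Bigr)K|z_0-x_0|.
\end{equation*}
Next we apply Lemma~\ref{lemma:concentration} to the shifted functions $\tilde\zeta(s)=\zeta(s+t_0)$ and $\tilde\xi(s)=\xi(s+t_0)$, which satisfy its hypotheses with the same $\eta$ and $l$. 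Fixing any $\epsilon\in(0,\delta)$, for example $\epsilon=\delta/2$, the exponent is bounded by $-(\delta-\epsilon)(t-t_0)+\kappa$.

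Finally, from $\varphi(t;t_0,z_0)-\varphi(t;t_0,x_0)=\int_0^1 w(t,r)\,dr$ we obtain the claim with $K_p=Ke^{\kappa}$ and $\lambda_p=\delta-\epsilon>0$.

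The main obstacle is uniformity for IES in the sense of Definition~\ref{def:IES}: the constant $\kappa$ from Lemma~\ref{lemma:concentration} may depend on $t_0$. This happens because $\sup_t\int_{t_0}^t\zeta$ and the transient of $\xi$ depend on the initial time. The statement, as written, fixes $t_0$ and lets $K_p,\lambda_p$ exist for it, so the argument above suffices for it. For a $t_0$-uniform version we would bound $\sup_{t\ge s}\int_s^t\zeta$ uniformly in $s$ (for instance $\zeta$ integrable with $\zeta\ge0$ eventually, or $\int_s^t\zeta=\int_0^t\zeta-\int_0^s\zeta\le\eta-\inf_s\int_0^s\zeta$ when that infimum is finite). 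We would likewise handle $\xi$ on $[t_0,N]$ via its local bound, noting that the transient only matters when $t_0<N$.

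A secondary technical point is justifying the Dini-derivative inequality and the differentiability of $\psi$ in $r$. Both are standard consequences of the $C^1$ assumptions and follow as in the proof of Theorem~\ref{thm1}.
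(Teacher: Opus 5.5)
Your proposal is correct and follows the paper's proof essentially step for step: you use a $K$-reachable curve, the variational equation for $w=\partial_r\psi$, Coppel's (Dini-derivative) bound with subadditivity of $\mu$, Lemma~\ref{lemma:concentration}, and the fundamental theorem of calculus, which yields $K_p=Ke^{\kappa}$ and $\lambda_p=\delta-\epsilon$. Two of your points refine what the paper's proof passes over silently: shifting $\zeta,\xi$ in time before invoking the lemma, and observing that $\kappa$ can depend on $t_0$, so the concluding ``hence IES'' in the $t_0$-uniform sense of Definition~\ref{def:IES} needs an extra uniformity hypothesis.
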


\begin{proof}
    The proof below follows the outline of the proof in \cite{contractive-with-inputs-sontag} along with some steps to show that despite the perturbations IES of the system is preserved.  We know that since $f(x,t)$ is infinitesimally contracting on $G$ with contraction rate $c>0$, $\mu(\D f(x,t))\le -c$ for all $x \in G, t \ge t_0$. Consider two solutions $\varphi(t;t_0,x_0)$ and $\varphi(t;t_0,z_0)$ of $\dot{x}=f(x,t) + g(x,t) + h(x,t)$. Choose a $C^1$ curve $\gamma:[0,1]\to G$ with $\gamma(0)=z_0$, $\gamma(1)=x_0$, and $|\gamma'(r)|\le K\,|x_0-z_0|$ for all $r\in[0,1]$. Define $\psi(t,r):=\varphi(t;t_0,\gamma(r))$ and $w(t,r):=\frac{\partial \psi}{\partial r}(t,r)$.
    Differentiating and using the chain rule,
    \begin{equation}
    \small
    \begin{aligned}
    & \frac{\partial w}{\partial t}(t,r)
    = \frac{\partial}{\partial r}\Big(
        f(\psi(t,r),t)
    + g(\psi(t,r),t)
        + h(\psi(t,r),t)\Big) \\
    &= \Big(
        \D f(\psi(t,r),t)
        + \D g(\psi(t,r),t)
     + \D h(\psi(t,r),t)
        \Big) w(t,r).
    \end{aligned}
    \end{equation}
    By Coppel's inequality \cite{coppel1965stability} and sub-additivity of logarithmic norms \cite{desoer2009feedback} (Chapter 2):
    \begin{equation}
    \begin{aligned}
    & |w(t,r)| \le |w(t_0,r)| \exp \; \!\Bigl(\int_{t_0}^{t} \mu(\D f(\psi(\tau,r), \tau))+\\
     &\mu(\D g(\psi(\tau,r),\tau)) +\mu(\D h(\psi(\tau,r),\tau))\,d\tau\Bigr).
    \end{aligned}
    \end{equation}
    From assumptions \eqref{eq:concentration-on-perturbation},\eqref{eq:sub-expansivity} and Lemma \ref{lemma:concentration}, for any $\epsilon \in (0,\delta)$ there exists $\kappa$ such that
    \begin{equation}
    \begin{aligned}
            & \int_{t_0}^{t} \mu\!\big( \D f (\psi(\tau,r))) + \mu(\D g(\psi(\tau,r),\tau)) + \\ 
            &  \mu(\D h(\psi(\tau,r),\tau)) \, \mathrm d\tau < -(\delta - \epsilon)(t-t_0) + \kappa.
    \end{aligned}
    \end{equation}
    Hence by the Fundamental Theorem of Calculus,
    \begin{equation}
    \varphi(t;t_0,x_0)-\varphi(t;t_0,z_0)=\psi(t,1)-\psi(t,0)=\int_0^1 w(t,s)\,ds,
    \end{equation}
    so
    \begin{equation}
    \begin{aligned}
     & |\varphi(t;t_0,x_0)-\varphi(t;t_0,z_0)| \le 
     \int_0^1 |w(t,s)| \, ds\ \\
     & \le e^{-(\delta -\epsilon)(t-t_0) + \kappa} \int_0^1 |w(t_0,s)|ds \\
     & = e^{-(\delta -\epsilon)(t-t_0) + \kappa} \int_0^1 |\gamma'(s)|ds \\
     & \le e^\kappa K e^{-(\delta -\epsilon)(t-t_0)} |x_0 - z_0|
    \end{aligned}
    \end{equation}
    for all $t \ge t_0$.
\end{proof}

\begin{remark}
Before stating the next corollary, we highlight a key distinction from Lyapunov-based stability analysis: infinitesimal contraction is invariant to the addition of terms $g(x,t)$ satisfying $\mu(\D g(x,t)) \le c - \epsilon$, where $c$ is the nominal contraction rate and $\epsilon > 0$, in the same logarithmic norm \cite{FB-CTDS} (e.g., time-varying disturbances $g(x,t) = a(t)$). Thus, if $\dot{x} = f(x,t) + \delta(x,t)$, where $\delta(x,t)$ converges uniformly to a contraction-preserving vector field $g(x,t)$, then the perturbed dynamics can be written as $\dot x = f(x,t) + g(x,t) + \delta'(x,t)$, where $\delta'(x,t) \to 0$ and $f(x,t) + g(x,t)$ is infinitesimally contracting. Hence, for our purposes, the following Corollary assumes $\delta(x,t)$ converges uniformly to $0$.
\end{remark}

\begin{corollary}\label{cor1}
Assume $f(x,t)$ is infinitesimally contracting with contraction rate $c > 0$ on a forward invariant, $K$-reachable set $G \subset \mathbb{R}^n$. Consider a perturbation $\delta(x,t)$ that is continuously differentiable w.r.t. $x$ and continuous in $t$, which preserves forward invariance of the set $G$. Further, let $\gamma(t) := \sup_{x \in G} |\delta(x,t)|$, assume that $\gamma(t)$ is bounded and $ \lim_{t \to \infty} \gamma(t)= 0$ and $\delta(x,t)$ satisfies either \eqref{eq:concentration-on-perturbation} or \eqref{eq:sub-expansivity}. Then $\dot{x} = f(x,t) + \delta(x,t),$ is incrementally exponentially stable on $G$, with all trajectories converging asymptotically to any solution of $f(x,t)$. 
\end{corollary}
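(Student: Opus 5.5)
The plan has two parts. First, IES follows from Proposition~\ref{Prop:Nonlinear}. Second, the convergence claim follows from a variation-of-constants (Alekseev--Gröbner type) comparison between perturbed and nominal trajectories, exploiting $\gamma(t)\to 0$ and the exponential contraction of the nominal flow.

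\textbf{Step 1 (IES).} If $\delta$ satisfies \eqref{eq:concentration-on-perturbation}, set $g=\delta$ and $h=0$; then $\xi\equiv 0\le c-\delta'$ for any $\delta'\in(0,c)$. If instead $\delta$ satisfies \eqref{eq:sub-expansivity}, set $h=\delta$ and $g=0$; then $\zeta\equiv0$ and its integral is trivially bounded. In either case the hypotheses of Proposition~\ref{Prop:Nonlinear} hold, since $G$ is forward invariant for the perturbed flow. Hence $\dot x=f+\delta$ is IES on $G$ with constants $K_p,\lambda_p$.

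\textbf{Step 2 (convergence to nominal solutions).} Let $x(t)$ solve the perturbed system and $y(t)=\varphi_f(t;t_0,y_0)$ solve the nominal one, both starting in $G$. I would avoid differentiating the nominal flow with respect to initial data; that would require bounds on $\nabla_x\delta$, which we do not have uniformly. Instead I compare $x$ with the nominal flow directly using the distance estimate built from the logarithmic norm. The upper right Dini derivative of the distance satisfies
\begin{equation*}
D^+|x(t)-y(t)|\le \sup_{z\in G}\mu(\nabla_x f(z,t))\,|x(t)-y(t)| + |\delta(x(t),t)|.
\end{equation*}
To justify this, write $f(x,t)-f(y,t)=\int_0^1\nabla_x f(y+s(x-y),t)\,ds\,(x-y)$ along a segment, or along a $K$-reachable curve, in which case the overshoot factor $K$ enters. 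One then uses $\mu(\int A)\le\int\mu(A)$.

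An alternative is Theorem~\ref{thm1} applied on short intervals. Write $x(t+\tau)$ as the nominal flow from $x(t)$ plus an $O(\gamma\tau)$ error, and compose. Either route yields
\begin{equation*}
|x(t)-y(t)|\le K e^{-c(t-t_0)}|x_0-y_0| + K\int_{t_0}^t e^{-c(t-s)}\gamma(s)\,ds .
\end{equation*}
This is exactly the Alekseev--Gröbner estimate specialised to the contractive case.

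\textbf{Step 3 (vanishing of the convolution).} Since $\gamma$ is bounded by some $M$ and tends to $0$, fix $\varepsilon>0$ and choose $T$ such that $\gamma(s)<\varepsilon$ for $s\ge T$. Split the integral at $T$. The tail contributes at most $K\varepsilon/c$. The head is bounded by $KMe^{-c(t-T)}/c$, which tends to $0$. Therefore $|x(t)-y(t)|\to0$ for every nominal solution $y$, which gives the asserted asymptotic convergence.

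\textbf{Main obstacle.} The main obstacle is Step 2 when $G$ is not convex. There the segment argument fails, and the comparison must go through $K$-reachable curves. Since those curves depend on the endpoints $x(t),y(t)$, which move with $t$, it is unclear that the Dini-derivative estimate holds with constant $K$ rather than compounding.

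I would handle this with the discretised semigroup argument, using the nominal evolution $\Phi_f(t,s)$:
\begin{equation*}
x(t)-y(t)=\Phi_f(t,t_0)x_0-\Phi_f(t,t_0)y_0+\sum_k\Big[\Phi_f(t,t_{k+1})x(t_{k+1})-\Phi_f(t,t_{k})x(t_k)\Big].
\end{equation*}
Each bracket compares nominal solutions from $x(t_{k+1})$ and from $\Phi_f(t_{k+1},t_k)x(t_k)$. By Theorem~\ref{thm1} it is bounded by $Ke^{-c(t-t_{k+1})}\int_{t_k}^{t_{k+1}}\gamma + o(\Delta)$; both points lie in $G$ by forward invariance of $G$ under the two flows. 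Here $K$ appears only once per term, not compounded. Letting the mesh go to $0$ recovers the integral bound above.
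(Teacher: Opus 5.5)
Your proof is correct, and Steps 1 and 3 match the paper. Step 2 takes a genuinely different route.

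The paper gets the key estimate
\[
|x_p(t)-x_c(t)|\le K\int_{t_0}^{t} e^{-c(t-\tau)}\gamma(\tau)\,d\tau
\]
from the Alekseev--Gr\"obner formula, by bounding the kernel $\nabla_x\varphi(t;\tau,x_p(\tau))$. It then reaches an arbitrary nominal solution through the triangle inequality. You derive the same estimate from a discretized telescoping sum over the nominal evolution. This uses only the finite-difference bound of Theorem~\ref{thm1} on pairs of points in $G$, plus a one-step local error.

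Your stated reason for avoiding the flow derivative is mistaken. $\nabla_x\varphi(t;\tau,x)$ is the Jacobian of the \emph{nominal} flow. It solves $\dot W=\nabla_x f(\varphi(s;\tau,x),s)\,W$ and involves no $\nabla_x\delta$. Since the nominal trajectory from $x\in G$ stays in $G$, Coppel's inequality gives $\|\nabla_x\varphi(t;\tau,x)\|\le e^{-c(t-\tau)}$ directly. So the AG route is fully available, and it is shorter.

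What your route buys is generality. It never differentiates the flow, so it works verbatim when the nominal field is only locally Lipschitz and IES on $G$. That is the setting the paper invokes in Remark~\ref{rm:alekseev-grobner} and in the safety application.

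The price is a limiting argument, which you should make explicit. The one-step defect $|x(t_{k+1})-\Phi_f(t_{k+1},t_k)x(t_k)|-\int_{t_k}^{t_{k+1}}\gamma$ must be $O(\Delta^2)$ \emph{uniformly in $k$} on $[t_0,t]$. This follows from Gronwall with a Lipschitz constant of $f$ on a compact neighbourhood of the trajectory segment, and it ensures the roughly $(t-t_0)/\Delta$ errors sum to $O(\Delta)$. You also need $e^{-c(t-t_{k+1})}\le e^{c\Delta}e^{-c(t-s)}$ for $s\in[t_k,t_{k+1}]$ so the Riemann sum converges to the integral. An unquantified $o(\Delta)$ per term is not enough on its own.

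Finally, drop the Dini-derivative route or restrict it to convex $G$. Along a $K$-reachable curve, $f(x,t)-f(y,t)$ is not of the form $A(x-y)$ with $\mu(A)\le -c$, as you observe yourself. In general, only the discretized argument carries the proof.
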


\begin{proof}
By assumption, $G$ is forward invariant for the dynamics induced by $f(x,t) + \delta(x,t)$ and given that $\delta(x,t)$ satisfies either \eqref{eq:concentration-on-perturbation} or \eqref{eq:sub-expansivity} the perturbed system is IES on $G$ by Proposition \ref{Prop:Nonlinear}. 
Let $x_c(t) = \varphi(t;t_0,x_0)$ be the solution of $\dot{x} = f(x,t)$ for some initial condition $x_0$ and $x_p(t)= \psi(t;t_0,x_0)$ be the solution of the perturbed system $\dot{x} = f(x,t) + \delta(x,t)$ with initial condition $x_0$. By the Alekseev-Gröbner (AG) formula (Lemma 3 in \cite{brauer1966perturbations}):
\begin{equation}\label{eq:alekseev-formula}
\begin{aligned}
    x_p(t) - x_c(t) = \int_{t_0}^{t} \D \varphi (t;\tau,x_p(\tau))\delta(x_p(\tau),\tau) d\tau.
\end{aligned}
\end{equation}
Since the nominal system is infinitesimally contracting on the $K-$reachable set $G$ with contraction rate $c$ by Theorem \ref{thm1}: 
\begin{equation}
    |\varphi(t;\tau,x) - \varphi(t;\tau,y)| \le Ke^{-c(t-\tau)}|x-y|.
\end{equation}
Hence the function $\D \varphi(t;\tau,x_p(\tau))$ satisfies in $G$:
\begin{equation}
    \|\D \varphi(t;\tau,x_p(\tau))\|  \le Ke^{-c(t-\tau)}.
\end{equation}
The perturbed solution $x_p(\tau)$ is assume to be forward-invariant in $G$ therefore:
\begin{equation}\label{eq:nonlinear-variation-of-constants}
\begin{aligned}
    & |x_p(t) - x_c(t)| \le 
    \int_{t_0}^{t} \Bigl(\|\D \varphi (t;\tau,x_p(\tau))\|\\
    &  |\delta(x_p(\tau),\tau)| d\tau \Bigr) \le Ke^{-ct}\int_{t_0}^t e^{c\tau} |\gamma(\tau)| d\tau.
\end{aligned}
\end{equation}

\noindent By assumption $\gamma(\tau) \to 0$, so for any $\epsilon > 0$ there exists $N$ such that for all $\tau > N$, $\gamma(\tau) < \frac{c\epsilon}{2}$. Since $\gamma$ is bounded defining $M= \sup_{\tau \le N} |\gamma(\tau)|$ we have:
\begin{equation}
\begin{aligned}
   & e^{-ct}\int_{t_0}^t e^{c\tau} |\gamma(\tau)| d\tau \le e^{-ct}\int_{t_0}^{N}e^{c\tau}Md\tau \\
   & + e^{-ct}\int_{N}^{t} \frac{c\epsilon}{2} e^{c\tau}d\tau \le \frac{Me^{-ct}}{c}(e^{cN} -e^{ct_0}) + \frac{\epsilon}{2}.
\end{aligned}
\end{equation}

\noindent Choosing $T$ such that 
\begin{equation}
Me^{-cT}\left(e^{cN} - e^{ct_0}\right)/c \le \epsilon/2
\end{equation} shows that for $t > \max\{N,T\}$ we have $|x_p(t) - x_c(t)| \le K\epsilon$ and since $\epsilon$ was arbitrary and $K$ is a fixed constant $x_p(t) \to x_c(t)$. Since the nominal system is contractive a triangle inequality shows $x_p(t)$ approaches any solution of the nominal system.
\end{proof}

\begin{remark}\label{rm:alekseev-grobner}
The special case in which the perturbation decays exponentially uniformly on $G$, that is, $\sup_{x\in G}|\delta(x,t)| \le L_2 e^{-c_2 t}$ for some $L_2,c_2>0$, is closely related to a simple extension of Theorem~3 in \cite{hamadeh2015contraction} to handle vanishing disturbances and Corollary 3.17 of \cite{FB-CTDS}. In that setting, and when $G$ is convex, one obtains exponential convergence of the perturbed trajectory to the corresponding nominal trajectory. Lemma~4 in \cite{contractive-with-inputs-sontag} generalizes such to when G is $K-$reachable by imposing further that the differential of the perturbation is also exponentially decaying, meaning there exists $L_3,c_3 > 0$ such that $\sup_{x\in G}\|\D \delta(x,t)\| \le L_3 e^{-c_3 t}.$ We note that the AG formula gives a slightly sharper version of the two results above: for exponential (and asymptotic) convergence of the perturbed trajectory to nominal trajectories, it is not necessary to assume convexity of $G$, an exponential (or asymptotic) bound on the differential of the perturbation and an infinitesimally contracting nominal system. Indeed, suppose that both the nominal and perturbed systems are forward invariant on the set $G$, that the nominal system is IES (\emph{this includes infinitesimally contracting vector fields but is not limited to it}) on $G$ with rate $c>0$ and overshoot $K$, that $\delta(x,t)$ is continuous in $x$ \emph{(differentiability is not needed)} and in $t$, further $\sup_{x\in G}|\delta(x,t)| \le L_2 e^{-c_2 t}$ (the proof of the asymptotic case follows from the Corollary). Then, for any nominal solution $\varphi(t;t_0,x_0)$ and any perturbed solution $\psi(t;t_0,x_1)$, the triangle inequality and the AG formula yield
\begin{equation}
\begin{aligned}
    & |\varphi(t;t_0,x_0)-\psi(t;t_0,x_1)| \le \\
    & |\varphi(t;t_0,x_0)-\varphi(t;t_0,x_1)|  + |\varphi(t;t_0,x_1)-\psi(t;t_0,x_1)|  \\
    &\le K e^{-c(t-t_0)}|x_0-x_1|
    + C_\rho e^{-\rho(t-t_0)},
\end{aligned}
\end{equation}
for every $\rho<\min\{c,c_2\}$ and for some constant $C_\rho>0$. Consequently,
\begin{equation}
    |\varphi(t;t_0,x_0)-\psi(t;t_0,x_1)|
    \le \bar C_\rho \bigl(1+|x_0-x_1|\bigr)e^{-\rho(t-t_0)}
\end{equation}
for some $\bar C_\rho>0$. This has a rich history in the theory of stability of variations which is studied in \cite{brauer1966perturbations}.
\end{remark}

\begin{remark}
     Lastly, we note that if $f(x,t) $ has a unique equilibrium point then $f(x,t) + \delta(x,t)$ is IES and converges asymptotically (and exponentially if $\delta(x,t) \to 0$ exponentially) to such equilibrium point and if $f(x,t)$ is periodic then the perturbed system is IES and converges asymptotically (and exponentially if $\delta(x,t) \to 0$ exponentially) to the periodic solution of $\dot{x}(t) = f(x,t)$.
\end{remark}

\section{Safety Via Vanishing Controllers}
We apply the tools above to design a time-varying feedback law $u(x,t)$ together with a feedforward input $l(t)$ for the control-affine system
\begin{equation}\label{eq:safe-closed-loop-system}
    \dot{x} = f(x) + g(x)u(x,t) + l(t),
\end{equation}
so that trajectories avoid an unsafe set $\mathcal U$. We implement the design as a safety filter partially satisfying the conditions of Corollary~\ref{cor1}.

\begin{assumption}
Let $f$ be continuously differentiable and infinitesimally contracting in $\mathbb{R}^n$, w.r.t. a norm $|\cdot|$, which is differentiable away from the origin.  Define the compact set $B_r(0):=\{x:|x|\le r\}$ and assume $f(0)=0$. Let the unsafe set be $\mathcal U := \{x\in\mathbb{R}^n : h_U(x) < 0\},$ where $h_U:\mathbb{R}^n\to\mathbb{R}$ is continuously differentiable, and the safe region $\mathcal S := B_r(0)\setminus \mathcal U$.
\end{assumption}

\begin{problem}\label{prob1}
Design $u(x,t)$ and $l(t)$ to drive all trajectories initialized in a ball $\mathcal{X}_0\subset \mathcal{S}$ to the origin and avoiding $\mathcal U$. 
\end{problem}

By exploiting global contractivity of the nominal vector field $f$, we can steer any initial condition in $B_r(0)$ toward a desired point $x_r$ by applying the constant feedforward input $l(t)\equiv -f(x_r)$. This renders $x_r$ an equilibrium of the shifted dynamics, although it does not necessarily preserve safety. If there exists a safe path connecting $x_r$ to the origin, i.e., a curve that does not intersect $\mathcal U$, then we may gradually drive the feedforward input to zero, $l(t)\to 0$, while keeping the resulting trajectory close to the reference evolution induced by $l(t)$, as in the equilibrium-tracking constructions studied in \cite{davydov2025time}. While choosing such an $x_r$ is straightforward in low-dimensional examples, it becomes nontrivial in high dimensions. A practical approach is to compute reference points $x_r$ that admit simple safe connectors to the origin (e.g., collision-free straight-line paths) and are ``close'' to the set of initial conditions. One way to build such safe connectors in high dimensional spaces is presented in \cite{kavraki2002probabilistic}. Even if such an $x_r$ is available, the main issue remains: how do we ensure that \emph{every} trajectory initialized in a ball $\mathcal{X}_0$ follows the reference path induced by $l(t)$ safely? To that end, we introduce a time-vanishing feedback $u(x,t)$ together with a safety filter that enforces the CBF constraint along the closed-loop evolution. A key distinction from standard minimum-norm CLF--CBF QPs is that, among  all feasible safe inputs, we select one that maximally aligns with the direction of the vector field $f(x)+l(t)$, as enforced by the minimization objective in \eqref{eq:qp-vanishing-cbf-only}. In this way, the closed loop system is encouraged to track the safe path coordinated by $l(t)$. Lastly an $\ell_2$ bound on the state-feedback component of the control law is enforced to guarantee that the former vanishes in the limit, making the resulting problem a Quadratically Constrained Quadratic Program (QCQP). This is summarized in Algorithm~\ref{alg:vanishing-clf-cbf-qp}. 

\vspace*{2pt}
\begin{algorithm}
\DontPrintSemicolon
\caption{Vanishing CBF-QCQP Safety Filter}
\label{alg:vanishing-clf-cbf-qp}
\footnotesize

\KwIn{Dynamics $\dot x=f(x)+g(x)u(x,t)+l(t)$ with $(x_0,t_0)$; \;

Unsafe set $\mathcal U=\{x:h_U(x)<0\}$; Ball $B_r(0)=\{x:|x|\le r\}$; \;
Safe path $l(t)$ steering the equilibrium from $x_r$ to $0$; \;
$\kappa_B,\kappa_U \in \mathcal{K}$; \quad $\alpha(t) \in \mathcal{K}_{\infty}$; \quad $u_{max}$.}
\KwOut{$u(x,t)$.}

\BlankLine
\textbf{Define smooth barriers for staying inside} $B_r(0)$:\;
$h_B(x)\gets r-|x|$ \tcp*{$B_r(0)=\{h_B\ge 0\}$}
\While{$t\ge t_0$}{
    Measure $x\gets x(t)$, set $s(t)\gets 1+\alpha(t)$,\;
    Compute $k^\star(x,t)$ by solving:
    \begin{equation}
    \label{eq:qp-vanishing-cbf-only}
    \begin{aligned}
    & \min_{k\in\mathbb{R}^m} \frac{1}{2}|k|^2 - \langle f(x) + l(t), g(x) k\rangle \\
    \text{s.t.}\quad
    & L_f h_B(x) + L_g h_B(x)\,\frac{k}{s(t)} + \nabla h_B(x)^\top l(t) \\
     & \ \ge\ -\kappa_B\!\big(h_B(x)\big),\\
    & L_f h_{U}(x) + L_g h_{U}(x)\,\frac{k}{s(t)} + \nabla h_{U}(x)^\top l(t) \\
    & \ \ge\ -\kappa_U\!\big(h_{U}(x)\big), \\
    & |k|_{2}^2\le u_{max}
    \end{aligned}
    \end{equation}
    
    Apply $u(x,t)\gets \dfrac{k^\star(x,t)}{s(t)}$ \tcp*{vanishing: $u(x,t)\to 0$}
}
\end{algorithm}

The first two inequalities in \eqref{eq:qp-vanishing-cbf-only} come from applying the CBF Definition~\ref{def:CBF} for the ball $B_r(0)$ and unsafe set $\mathcal{U}$ applied to the closed-loop dynamics $\dot x = f(x) + g(x)k/s(t) + l(t)$, with class-$\mathcal{K}$ functions $\kappa_B,\kappa_U$ respectively. In general feasibility of Algorithm~\ref{alg:vanishing-clf-cbf-qp} is not guaranteed: it requires a $k$ satisfying both the CBF constraints and $|k|_2^2 \le u_{\max}$. Hence feasibility must be verified for a given initial condition $x \in \mathcal{X}_0$ after running the algorithm. When Algorithm \eqref{alg:vanishing-clf-cbf-qp} is indeed feasible, the optimizer $k^\star(x,t)$ is unique, since the objective is strongly convex in $k$ and the feasible set is convex for each fixed $(x,t)$. However, to justify  substituting $k^\star$ into the ODE~\eqref{eq:safe-closed-loop-system}, one must additionally  establish that it is sufficiently regular as a function of $(x,t)$. In this paper we simply assume that the optimization problem in Algorithm~1 satisfies the constraint qualifications required to derive locally Lipschitz controllers which is presented in section 3 of \cite{mestres2025regularity}. Lastly if Algorithm~1 is indeed feasible along the entire closed-loop trajectory given a initial condition in $\mathcal{X}_0$ and starting time $t_0$, the closed-loop system is forward-invariant in the bounded set $\mathcal{S}$. Using \(|k^\star(x,t)| \leq \sqrt{u_{\max}}\) and \(s(t)\to\infty\) we conclude the induced perturbation \(g(x)k^\star(x,t)/s(t)\) vanishes to $0$ along the closed-loop path. Therefore, considering the pairwise difference between the controlled and nominal systems for an initial condition in $\mathcal{X}_0$, following the notation of Corollary~\ref{cor1}, the perturbation $\delta(x_p(t),t) = g(x_p(t))u(x_p(t),t) + l(t)$, where $x_p(t)$ is now a solution of the closed-loop system, vanishes whenever Algorithm~1 is feasible. By the convergence argument in Corollary~\ref{cor1} (which also works for locally Lipschitz vector fields by Remark~\ref{rm:alekseev-grobner}) the closed-loop system converges to the origin.

\subsection{Example}

Consider the two-dimensional nonlinear system:
\begin{equation}\label{eq:example_system}
    \dot{x} = \begin{bmatrix}
        -2x_1 + x_2 - \sin(x_1) \\
        -2x_2 - x_1 + \cos(x_2) - 1
    \end{bmatrix} + 
    \begin{bmatrix}
    1 \\
    1
    \end{bmatrix}u(x,t) + l(t).
\end{equation}
The nominal system is contractive w.r.t. the $\ell_2$ norm and we consider the ball $B_{40}(0) = \{x \;| \; |x|_{2} \le 40\}$. The unsafe set $\mathcal{U}$ is defined by the polynomial function:
\begin{equation}
    \begin{aligned}
        h_U(x_1,x_2) &= 4\left(\frac{x_1}{1.5}\right)^{4} 
        - 20\left(\frac{x_1}{1.5}\right)^{2}(x_1-15) 
        - 13\left(\frac{x_1}{1.5}\right)^{2} \\
        &+ 25(x_2-15)^{2} + 35(x_2-15) - 2
    \end{aligned}
    \label{eq:h_U}
\end{equation}
which describes a crescent-shaped region and is illustrated by the red shaded object in Figure \ref{fig:trajectories}. Initial conditions are sampled randomly from the ball $\mathcal{X}_0 := \{x:|x-x_0|\le r_0\}$ with $x_0=[0,18]^\top$, $r_0=2$.

Running a CLF--CBF QP as in \cite{Ames-Survey}, using different quadratic Lyapunov functions \(V(x)=x^\top P x\), becomes infeasible for many initial conditions in $\mathcal{X}_0$. This is geometrically intuitive: any quadratic Lyapunov function must temporarily increase to escape the well.  One may relax the stability constraint in CLF--CBF QPs by augmenting the CLF inequality to the objective, so that safety can be enforced strictly and stability is hopefully preserved. However it has been shown in \cite{reis2020control} that even if the QP is feasible for all $x$, the resulting controller can introduce new asymptotically stable equilibria.

Density functions proposed in \cite{rantzer-safe-control} can also be applied to solve the problem with a polynomial approximation of $f(x)$. Since $h_U(x) \ge 0$ is semialgebraic, the problem can be posed as a sum-of-squares (SOS) program. However, getting a tight approximation of the dynamics entails using high-order polynomials, leading to semidefinite programs with combinatorial complexity and with no guarantee of feasibility for low-order truncations. In our case, using a 10\textsuperscript{th} order Taylor polynomial to approximate the sinusoidal terms in \eqref{eq:example_system} led to an infeasible SOS program.

Now running Algorithm 1 with the following simple parameter setup:
\begin{equation}
    \begin{aligned}
        & \kappa_B(h) = 5h, \;\;  \kappa_U(h) = h, \;\; u_{max}=100, \;\; \alpha(t) = t, \\ 
        & x_r = [-25, 25]^{\top}, \; l(t) = -f(x_r(1-t/T_{\text{sim}})), \; t_0 = 0,
    \end{aligned}
\end{equation}
where $T_{\text{sim}}$ is the simulation horizon and $f$ is the nominal vector field, results in a feasible program for every initial condition in $\mathcal{X}_0$. The resulting trajectories are displayed in Figure \ref{fig:trajectories} and show asymptotic convergence of every sampled initial condition in $\mathcal{X}_0$ to the origin.

\section{Conclusion}

In this letter, we presented novel perturbation bounds that preserve the IES of a nominally contractive system and, under additional conditions, guarantee convergence of the perturbed system to a different nominally contractive system. As an application, we showed how to design time-vanishing feedforward and feedback controllers that ensure safety and convergence to a desired point. In future work, we aim to leverage advances in motion planning, such as \cite{teshome2025real}, to extend this safety framework to more realistic settings, including evader--pursuer dynamics and time-varying unsafe sets \cite{bozdag2025pursuit}.

\begin{figure}[t]
    \centering
    \includegraphics[width=0.6\linewidth]{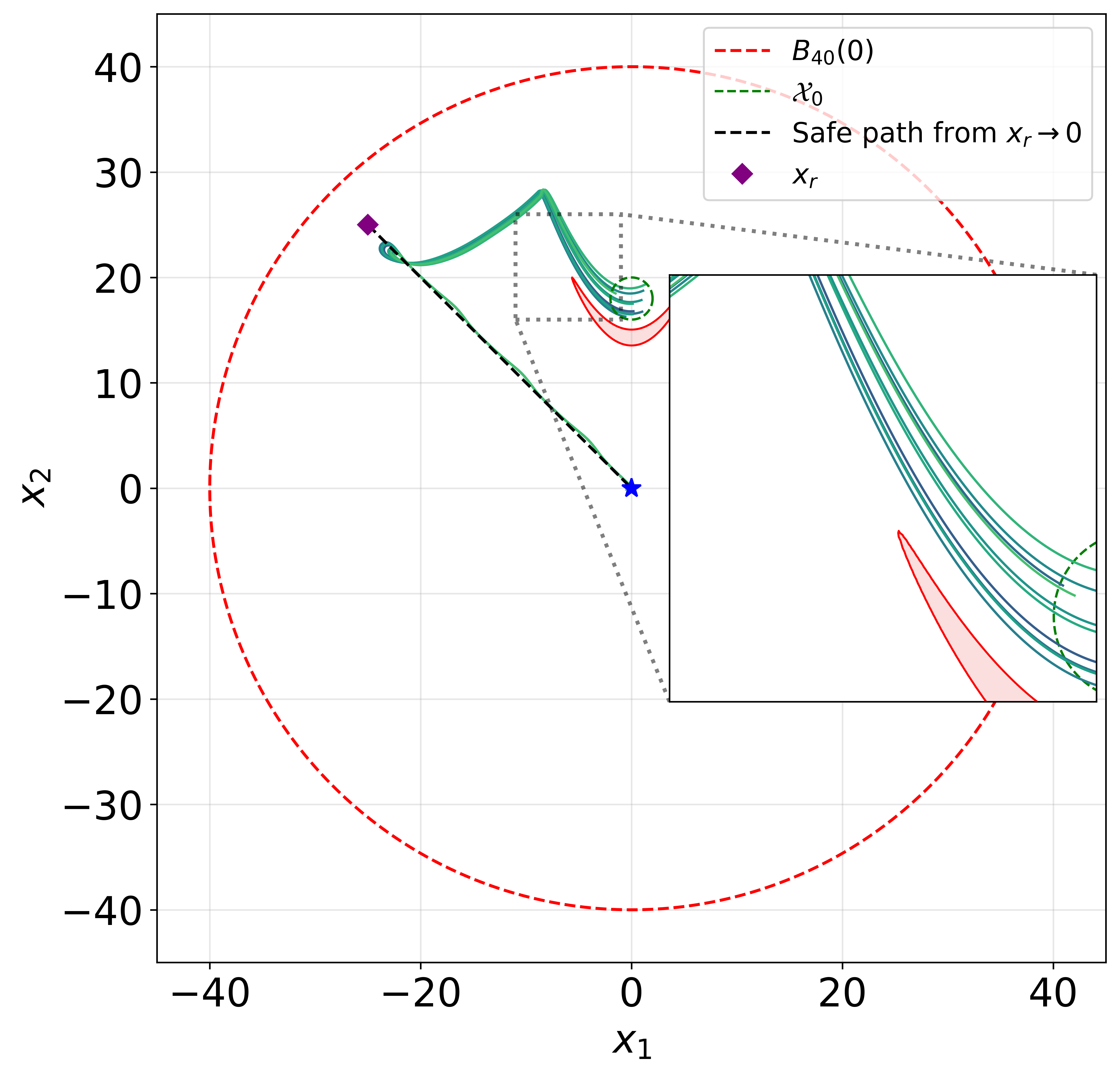}
    \caption{Algorithm~\ref{alg:vanishing-clf-cbf-qp} applied to 10 randomly sampled initial points in $\mathcal{X}_0$. All trajectories avoid the crescent-shaped unsafe set and converge to the origin (blue star), under the vanishing input $l(t)$ and controller $u(x,t)$.}
    \label{fig:trajectories}
\end{figure}

\bibliographystyle{IEEEtran} 
\bibliography{ref}

\end{document}